\newtheorem{theorem}{Theorem}[section]
\newtheorem{lemma}{Lemma}[section]
\theoremstyle{definition}
\theoremstyle{remark}
\definecolor{cucol}{rgb}{0,0,0.8}
\definecolor{afcol}{rgb}{1,0,0}
\numberwithin{equation}{section}
\begin{document}
	\title{\bf Design of Optimal Controls in Acausal LQG Problems}
	\date{}
	\author{ Arzu Ahmadova\thanks{Faculty of Mathematics, University of Duisburg-Essen
			45127, Essen, Germany, E-mail: arzu.ahmadova@uni-due.de}\thanks{Corresponding author} and Agamirza E. Bashirov\thanks{Department of Mathematics, Eastern Mediterranean University,  Gazimagusa, Mersin 10, Turkey, Email: agamirza.bashirov@emu.edu.tr}}
	
	\maketitle
	\begin{abstract}
		In control theory, a system which has output depending only on the present and past values of the input is said to be causal (or nonanticipative). Respectively, a system is acausal (or non-causal) if its output depends on future inputs as well. Overall majority of literature in stochastic control theory discusses causal systems. Only a few sources indirectly concern acausal systems. In this paper, we systemize these results under main idea of acausality and present a background for designing optimal controls in acausal LQG problems.   
	\end{abstract}
	\textbf{Keywords:} Stochastic system, LQG problem, acausal system, stochastic control.\\
	\textbf{AMS Subject Classification:} 93E20, 93E11, 93E03

	
	\section{Introduction}
	\label{S.1}
	
	Overall majority of literature in stochastic control theory discusses causal stochastic control systems. Indeed, a signal-observation system in the general form
	\begin{equation}
		\label{1.1}
		\begin{cases}
			dx_t=f(t,x_t,u_t)\,dt+g(t,x_t,u_t)\,dw_t,\\
			dz_t=p(t,x_t,u_t)\,dt+q(t,x_t,u_t)\,dv_t,
		\end{cases}
	\end{equation}
	is causal if simply $w$ and $v$ are independent or correlated Wiener processes, which is a common condition under which stochastic control problems are considered. Control problems of this type are widely investigated and very important theoretical and applied results are achieved (see the resent books by Yong and Zhou \cite{YZ}, Fleming and Soner \cite{FS}, L\"{u} and Zhang \cite{LZ}, and references therein). However, the system in \eqref{1.1} becomes acausal if $w$ is a time delay of $v$, that is, $w_t=v_{t-\varepsilon}$ assuming that $g(t,x,u)\not= 0$, $q(t,x,u)\not= 0$, and $\varepsilon >0$. 
	
	To motivate the actuality of control systems with time-delay in noises, consider the scenario from \cite{B1} of a spacecraft at a fixed instant $t$. Assume that the radio signals reach it from the Earth and come back for a time $\varepsilon >0$. Generally, radio signals propagate safely in vacuum. However, they account noises at the beginning and at the end of the travel when they pass through the higher layers of the Earth’s atmosphere. Therefore, a ground radar detects at $t$ the signal 
	\[
	z_t=x_{t-\varepsilon /2}+w'_t
	\]
	about the state of the spacecraft at the instant $t-\varepsilon /2$ corrupted by the noise $w'_t$ which is modeled as a white noise. Now, assume that the control $u$ changes the position $x$ of the spacecraft in accordance to the linear equation $x'=Ax+Bu$ in which the noise effects and the distance to the spacecraft are neglected. However, if they are taken into account, the position of the spacecraft at $t-\varepsilon /2$ is changed by the control action that is sent by the ground radar at $t-\varepsilon $. This control propagating through the atmosphere accounts the noise $w'_{t-\varepsilon }$. Therefore, the equation for the position of the spacecraft must be updated as
	\[
	x'_{t-\varepsilon /2}=Ax_{t-\varepsilon /2}+B(u_{t-\varepsilon }+w'_{t-\varepsilon }).
	\]
	The substitution $\tilde{x}_t=x_{t-\varepsilon /2}$ and $\tilde{u}_t=u_{t-\varepsilon }$ leads to the signal-observation system
	\[
	\begin{cases}
		\tilde{x}'_t=A\tilde{x}_t+B\tilde{u}_t+Bw'_{t-\varepsilon },\\
		z_t=\tilde{x}_t+w'_t,
	\end{cases}
	\]
	in which the noise of the signal system is a delay of the observation noise.
	
	Since the altitude of the Earth orbiting satellites is negligible in comparison to the speed of radio signals, the time-delays can be disregarded in satellite communication. However, NASA's Mars Exploration Program (MEP) and other programs like Voyager-2 require communication with spacecraft significantly distanced from the Earth. Therefore, it is seen that the systems with time-delay of noises more adequately fit to the guidance, control, and dynamics problems of significantly distanced spacecraft. This justifies the importance of control problems for acausal systems obtained by pointwise delay of noises. 
	
	A signal-observation system becomes acausal also in the case if the state noise is a distributed delay of the observation noise. Indeed, a system of the form
	\[
	\begin{cases}
		x'_t=Ax_t+Bu_t+\varphi _t,\\
		dz_t=Cx_t\,dt+dv_t,
	\end{cases}
	\]
	where $\varphi $ is a distributed delay of $w$, that is 
	\[
	\varphi _t=\int _{\max (0,t-\varepsilon )}^t\Phi _{t,s-t}\,dw_s,
	\]
	is acausal for the two reasons. At first, a possible correlation of the Wiener processes $w$ and $v$ makes $v_t$ and $\varphi _s$ for $t\le s<t+\varepsilon $ to be correlated. Secondly, even if $w$ and $v$ are independent, $z_t$ contains information about future of $x$ because $\varphi _t$ and $\varphi _{s}$ for $|t-s|<\varepsilon $ are correlated while they are independent if $|t-s|\ge \varepsilon $, that is
	\begin{equation}
		\label{1.2}
		\mathrm{cov}\,(\varphi _t,\varphi _s)=
		\begin{cases}\lambda _{t,s}\not= 0 & \text{if}\ |t-s|<\varepsilon , \\ 0 & \text{if}\ |t-s|\ge \varepsilon .\end{cases}
	\end{equation} 
	
	In Fleming and Rishel \cite{FR}, p.\,126, a random process $\varphi $ with the property in \eqref{1.2} is called a wide band noise. In existing literature a wide band noise is also called as a bandwidth noise \cite{KR}.  Therefore, we will use for them the abbreviation BN while WN will abbreviate white noise. Although the WN model is more popular in the existing literature, BNs describe real noises more adequately. A few such scenarios are as follows:
	
	{\it Spacecraft communication}. Radio signals traveling from ground radars to spacecraft and coming back go through the Earth's atmosphere and account a communication noise. In the existing literature, this noise is modeled as a WN \cite{CJ}. The main sources creating communication noise are water vapor and radioactive particles in the upper layers of the atmosphere. It is natural to think that in a small time interval the density of water vapor and intensity of radiation change continuously. The weather forecasting which is based on the continuity of the density of water vapor is an example of support for this. This creates a correlation of values of a communication noise at instants within small time intervals. This makes it a BN with small $\varepsilon >0$.
	
	{\it Finance}. In finance, the noises are the results of many social and natural events that unexpectedly change the environment surrounding the financial sphere. Here, the effects of the social and natural events are quite more durable than in spacecraft communication. Therefore, the values of a financial noise at instants falling into visible intervals should be correlated. This makes it a BN with relatively large $\varepsilon >0$.
	
	{\it Quantization}. Quantization is the process of converting an analog signal to digital. It causes a partial loss of information, creating a quantization noise. Usually, a quantization noise is modeled in the form of a WN. In the quantization of a continuous signal, its positive value is likely to stay positive for a possibly short period of time. This creates a correlation between the values of quantization noise within this time interval and makes it a BN.
	
	{\it Epidemiology}. Modeling epidemic processes is very important for understanding how infectious diseases progress and helps prevent public interventions. In fact, the process opposite to quantization can be observed here. The digital numbers of susceptible and infectious individuals, which vary individually over time, are approximated by continuously changing values \cite{AA}. The difference between them produces noise very similar to quantization noise. Therefore, its values should be correlated within a small time intervals making it a BN.
	
	{\it Sensors}. A wide variety of sensors are used in applications to measure properties of materials. Sensor measurements take into account three types of noise: a noise from the sensor input (signal noise), a sensor noise normally modeled as WN, and a noise associated with the conversion of discrete measurements into continuous. Again, a process opposite to quantization noise is observed here making the third noise BN.
	
	These scenarios demonstrate that in a majority of cases the real noises behave as BN. Somehow, they can be treated as WN if $\varepsilon $ is too small. If, however, a real noise behaves essentially different from WNs, then the respective WN driven system may produce conclusions inconsistent with reality. This motivates study of control and filtering problems for acausal BN driven systems.
	
	There are two approaches to BNs developed independently from each other. The first approach  is directed to study nearly optimal controls for BN driven control systems \cite{KR, BP, K1, K3, LRT}. However, the second approach allows to synthesize optimal controls in the exact form. It is based on an integral representation of BNs \cite{B2, B3}. This idea was developed in \cite{B4, B5} and resulted with distributed delay structure of BNs in \cite{B6}. A discrete analog of BNs are known as a finite step autocorrelated noise and filtering problems for them have been studied in \cite{SZY, JZZ, LZW, STL, HWG, FWZ, ZZWZ, LSZ}.
	
	As far as we know, the first acausal control problem was considered in \cite{B7} in the frame of LQG problems. This was a purely control result without its ingredient filtering problem. LQG problems are famous in control theory because they can be solved completely if the supporting linear filtering result is provided. In the recent years, several filtering results were accumulated for acausal linear systems \cite{B8, B9, B10, B11, B12}.  Making a combination of control and filtering results and designing optimal feedback controls for different acausal LQG problems becomes possible. This paper reviews acausality in the frame of LQG problems, discusses control and filtering ingredients in acausal LQG problems, provides either main idea or presents a simplified complete proof of the results, and most importantly presents a complete set of equations for designing optimal feedback controls.  
	
	
	\section{General acausality in LQG problems}
	\label{S.2} 
	
	We consider the LQG problem of minimizing the quadratic functional 
	\begin{equation}
		\label{2.1}
		J(u)=\mathbf{E}\bigg( \langle x_T,Hx_T\rangle +\int _0^T(\langle x_t,Fx_t\rangle +\langle u_t,Gu_t\rangle )\,dt\bigg)
	\end{equation}
	subject to the partially observed linear system
	\begin{equation}
		\label{2.2}
		\begin{cases}
			dx_t=\big( Ax_t+Bu_t+\varphi ^1_t\big) \,dt+dw_t,\ x_0=\xi ,\ 0<t\le T,\\
			dz_t=\big( Cx_t+\varphi ^2_t\big) \,dt+dv_t,\ z_0=0,\, 0<t\le T,
		\end{cases}
	\end{equation}
	where $\langle \cdot , \cdot \rangle $ is a scalar product and $\mathbf{E}(\cdot )$ is an expectation. For simplicity, we assume that $A$, $B$, $C$, $F$, $H$, and $G$ are time-independent and nonrandom matrices of respective dimensions. We assume that $F$, $H$, and  $G$ are symmetric matrices so that $F$ and $H$ are positive semi-definite and $G$ is positive-definite (shortly, $F\ge 0$, $H\ge 0$, and $G>0$). If $w$ and $v$ are a pair of correlated or independent Wiener processes, $\varphi ^1_t=0$ and $\varphi ^2_t=0$, then the system in \eqref{2.2} is causal and the optimal control $u^*$ minimizing \eqref{2.1} has a linear feedback form
	\begin{equation}
		\label{2.3}
		u^*_t=-G^{-1}B^\top Q_t\mathbf{E}(x^*_t|z^*_s,0\le s\le t),
	\end{equation}
	where $x^*$ and $z^*$ are the state and observation processes, respectively, corresponding to $u^*$, $B^\top $ is the transpose of $B$, and $\mathbf{E}(\cdot |\cdot )$ is a conditional expectation. This result is called a separation principle which we will call as a {\it classical separation principle} because \eqref{2.3} changes in the case of acausal setting. Below within this section, we assume that the noise inputs of the system $\varphi ^1$, $\varphi ^2$, $w$ and $v$ are  independent or dependent in any fashion. Just it will be assumed that $\varphi ^1$ and $\varphi ^2$ are square integrable random processes, $w$ and $v$ are square integrable martingales, and $(\varphi ^1, \varphi ^2, w, v)$ and $\xi $ are independent to be sure of the existence of respective integrals. 
	
	
	\subsection{Admissible controls}
	\label{S.2.1} 
	
	Let $L_2(0,T;X)$ be a space of square integrable in the Lebesque sense functions on $[0,T]$ and let $L_2(\Omega ,X)$ be a space of square integrable random variables provided that $(\Omega ,\mathcal{F}, \mathbf{P})$ is an underlying probability space. In the case of sub-$\sigma $-field $\mathcal{F}'$ of $\mathcal{F}$, the space of $\mathcal{F}'$-measurable square integrable random variables will be denoted by $L_2(\Omega ,\mathcal{F}',\mathbf{P},X)$. 
	
	Assuming that the dimensions of the state, control, and observation spaces are $n$, $m$, and $k$, consider $u\in L_2(0,T;L_2(\Omega ,\mathbb{R}^m))$. Then the system in \eqref{2.2} defines the respective process $z\in L_2(0,T;L_2(\Omega ,\mathbb{R}^k))$ which will be denoted by $z^u$. Denote by $\mathcal{F}^u_t=\sigma (z^u_s,0\le s\le t)$ the minimal $\sigma $-field generated by $z^u_s$, $0\le s\le t$. Let 
	\[
	U^u_t=L_2(\Omega ,\mathcal{F}^u_t,\mathbf{P},\mathbb{R}^m),\ 0<t\le T.
	\]    
	In particular, $\mathcal{F}^u_t=\mathcal{F}^0_t$ and $U^u_t=U^0_t$ if $u\equiv 0$. Define the sets
	\[
	U=\{ u\in L_2(0,T;L_2(\Omega ,\mathbb{R}^m)):u_t\in U^u_t,\ 0<t\le T\} 
	\]
	and
	\[
	U^0=\{ u\in L_2(0,T;L_2(\Omega ,\mathbb{R}^m)):u_t\in U^0_t,\ 0<t\le T\} .
	\]
	Here, $U$ consists of controls $u$ which are adapted with respect to the filtration generated by the observations. Normally, it should be a set of admissible controls. However, $U$ is not a subspace of $L_2(0,T;L_2(\Omega ,\mathbb{R}^m))$ which makes it inconvenient. On the contrary, $U^0$ is a subspace of $L_2(0,T;L_2(\Omega ,\mathbb{R}^m))$ consisting of $\mathcal{F}^0=\{ \mathcal{F}^0_t: 0\le t\le T\} $-adapted random processes. However, the selection of $U^0$ as a set of admissible controls means disregarding the partial observations. Therefore, the set of admissible controls will be defined as
	\[
	U_{\rm ad}=U\cap U^0.
	\]
	In \cite{BV, CP}, the following properties related to $U_{\rm ad}$ are proved.
	\begin{itemize}
		\item[\rm (a)]
		$U_{\rm ad}$ contains all controls which can be represented in the linear feedback form 
		\[
		u_t=a_t+\int _0^tK_{t,s}\,dz^u_s
		\]
		where $a_t$ is deterministic and $K_{t,\cdot }\in L_\infty (0,t;\mathbb{R}^{m\times k})$ for all $0\le t\le T$. 
	\end{itemize}
	This ensures that $U_{\rm ad}$ is not empty containing the important class of controls in the linear feedback form. Additionally, 
	\begin{itemize}
		\item[\rm (b)]
		If $u\in U$, then $U^0_t\subseteq U^u_t$ for all $0<t\le T$. 
		\item[\rm (c)]
		If $u\in U^0$, then $U^u_t\subseteq U^0_t$ for all $0<t\le T$.  
	\end{itemize}
	Therefore,
	\begin{itemize}
		\item[\rm (d)]
		If $u\in U_{\rm ad}$, then $U^0_t=U^u_t$ for all $0<t\le T$.
	\end{itemize}
	However,
	\begin{itemize}
		\item[\rm (e)]
		$u\in U$ is not sufficient for $U^0_t=U^u_t$ for all $0<t\le T$.  
	\end{itemize}
	This justifies the selection of $U\cap U^0$ as a set of admissible controls since the equality $U^0_t=U^u_t$ plays a role in our derivation.
	
	For the further properties of $U_{\rm ad}$, define
	\[
	U_\varepsilon =\{ u\in L_2(0,T;L_2(\Omega ,\mathbb{R}^m)):u_t\in U^u_{t-\varepsilon },\ 0<t\le T\} , 
	\]
	where $\varepsilon >0$ and $U^u_{t-\varepsilon }=\mathbb{R}^m$ if $0<t\le \varepsilon $. In \cite{BV}, it is proved that
	\begin{itemize}
		\item[\rm (f)]
		If $\varepsilon >0$ and $u\in U_\varepsilon $, then $U^0_t=U^u_t$ for all $0<t\le T$.
	\end{itemize}
	Respectively,
	\begin{itemize}
		\item[\rm (g)] $U_\varepsilon \subseteq U_{\rm ad}$ for all $\varepsilon >0$. 
	\end{itemize}
	This ensures that $U_{\rm ad}$ is quite large set including not only controls in the linear feedback form. Additionally,
	\begin{itemize}
		\item[\rm (h)] $U_{\rm ad}$ is dense in $U^0$.
	\end{itemize}
	This implies
	\begin{itemize}
		\item[\rm (i)] $\inf _{U_{\rm ad}}J(u)=\inf _{U^0}J(u)$, where $J$ is defined by \eqref{2.1}. 
	\end{itemize}
	Below, we use the short notation $\mathbf{E}^u_t=\mathbf{E}(\,\cdot \,|\mathcal{F}^u_t)$. In particular, $\mathbf{E}^0_t=\mathbf{E}(\,\cdot \,|\mathcal{F}^0_t)$.
	
	
	\subsection{Extended separation principle}
	\label{S.2.2}  
	
	\begin{lemma}
		\label{L.2.1}
		Assume that $A,H,F\in \mathbb{R}^{n\times n}$, $B\in \mathbb{R}^{n\times m}$, $C\in \mathbb{R}^{k\times n}$, $G\in \mathbb{R}^{m\times m}$, $H\ge 0$, $F\ge 0$, $G>0$, $w$ and $v$ are $n$- and $k$-dimensional martingales, $\varphi ^1\in L_2(0,T;L_2(\Omega ,\mathbb{R}^n))$, $\varphi ^2\in L_2(0,T;L_2(\Omega ,\mathbb{R}^k))$, $(\varphi ^1, \varphi ^2, w, v)$ and $\xi $ are independent. Additionally, assume that the functional $J$ from \eqref{2.1} subject to \eqref{2.2} takes its minimum on $U^0$ at $u^*\in U^0$. Then
		\begin{equation}
			\label{2.4}
			u^*_t=G^{-1}B^\top \mathbf{E}^0_ty_t,\ 0\le t\le T,
		\end{equation}
		where 
		\begin{equation}
			\label{2.5}
			y_t=-e^{A^\top (T-t)}Hx^*_T-\int _t^Te^{A^\top (s-t)}Fx^*_s\,ds
		\end{equation}
		and $x^*$ is the state process corresponding to the control $u^*$.
	\end{lemma}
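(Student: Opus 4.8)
The plan is to derive a first-order stationarity condition for $u^*$ on the linear subspace $U^0$ and then to identify the associated costate with the process $y$ of \eqref{2.5}. The starting point is that $u\mapsto x^u$ is affine: replacing $u^*$ by $u^*+\theta h$ with $h\in U^0$ and $\theta\in\mathbb{R}$ changes the state to $x^*+\theta\eta$, where the noise terms cancel and $\eta$ solves the deterministic-coefficient sensitivity equation $d\eta_t=(A\eta_t+Bh_t)\,dt$, $\eta_0=0$, i.e.\ $\eta_t=\int_0^te^{A(t-s)}Bh_s\,ds$. Since $U^0$ is a closed linear subspace of $L_2(0,T;L_2(\Omega,\mathbb{R}^m))$ and, by the standing hypotheses, $\theta\mapsto J(u^*+\theta h)$ is a quadratic polynomial in $\theta$ with finite coefficients, minimality of $u^*$ forces $\frac{d}{d\theta}J(u^*+\theta h)\big|_{\theta=0}=0$ for every $h\in U^0$. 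Expanding \eqref{2.1} and differentiating (using that $F,H,G$ are symmetric) gives
\[
0=2\,\mathbf{E}\Big(\langle Hx^*_T,\eta_T\rangle+\int_0^T\big(\langle Fx^*_t,\eta_t\rangle+\langle Gu^*_t,h_t\rangle\big)\,dt\Big).
\]

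Next I would substitute $\eta_t=\int_0^te^{A(t-s)}Bh_s\,ds$ and interchange orders of integration, using $\int_0^Tdt\int_0^tds=\int_0^Tds\int_s^Tdt$ together with Fubini under the expectation (legitimate since all integrands are square-integrable). Collecting the terms paired with $h_s$ yields
\[
0=2\int_0^T\mathbf{E}\Big\langle Gu^*_s+B^\top\Big(e^{A^\top(T-s)}Hx^*_T+\int_s^Te^{A^\top(t-s)}Fx^*_t\,dt\Big),\,h_s\Big\rangle\,ds=2\int_0^T\mathbf{E}\langle Gu^*_s-B^\top y_s,\,h_s\rangle\,ds,
\]
where the last equality is just the definition \eqref{2.5} of $y_s$. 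Note $y\in L_2(0,T;L_2(\Omega,\mathbb{R}^n))$ because $x^*\in L_2$ and $e^{A^\top\cdot}$ is bounded on $[0,T]$, so the conditional expectations below are well defined.

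The displayed identity says exactly that $s\mapsto Gu^*_s-B^\top y_s$ is orthogonal to $U^0$ in $L_2(0,T;L_2(\Omega,\mathbb{R}^m))$. Since $h_s$ can be any element of $L_2(\Omega,\mathcal{F}^0_s,\mathbf{P},\mathbb{R}^m)$ and one may localize in time — taking $h$ supported on an arbitrary subinterval $[a,b]$ with $h_s$ measurable with respect to $\mathcal{F}^0_a\subseteq\mathcal{F}^0_s$ — the orthogonal projection onto $U^0$ acts pointwise in $s$ as the conditional expectation $\mathbf{E}^0_s(\cdot)$, and orthogonality is equivalent to $\mathbf{E}^0_s\big(Gu^*_s-B^\top y_s\big)=0$ for a.e.\ $s\in[0,T]$. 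Since $u^*\in U^0$ we have $\mathbf{E}^0_su^*_s=u^*_s$, hence $Gu^*_s=B^\top\mathbf{E}^0_sy_s$, and left-multiplication by $G^{-1}$ (which exists because $G>0$) gives \eqref{2.4}.

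I expect the only delicate point to be the passage from ``$\frac{d}{d\theta}J(u^*+\theta h)\big|_{\theta=0}=0$ for all $h\in U^0$'' to the pointwise identity $\mathbf{E}^0_s(Gu^*_s-B^\top y_s)=0$, i.e.\ correctly identifying the orthogonal projection onto $U^0$ with the ``adapted conditional expectation'' $s\mapsto\mathbf{E}^0_s(\cdot)$ and checking that $U^0$ really is a closed subspace. This is exactly where the choice of $U^0$ (rather than the set $U$, which is not a linear subspace) is essential, as emphasized in Section~\ref{S.2.1}; the remaining steps are the routine variational computation indicated above.
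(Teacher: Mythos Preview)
Your proposal is correct and follows essentially the same route as the paper's proof: both take a variation $u^*+\theta h$ in the linear subspace $U^0$, use the affine dependence of the state on the control to express the first-order condition, substitute the explicit sensitivity $\eta_t=\int_0^t e^{A(t-s)}Bh_s\,ds$, interchange the order of integration, and then pass from the resulting orthogonality in $L_2(0,T;L_2(\Omega,\mathbb{R}^m))$ to the pointwise identity via arbitrariness of $h\in U^0$. You are in fact somewhat more explicit than the paper about why orthogonality to $U^0$ yields $\mathbf{E}^0_s(Gu^*_s-B^\top y_s)=0$ pointwise, whereas the paper simply invokes ``the arbitrariness of $u$ in $U^0$'' at that step.
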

	\begin{proof}
		Take $u\in U^0$. We have $u^*+\lambda u\in U^0$ for all $\lambda \in \mathbb{R}$ since $U^0$ is closed under the linear combinations. Denote
		\[
		h_t=\int _0^te^{A(t-s)}Bu_s\,ds.
		\]
		Then
		\begin{align*}
			0 &\le J(u^*+\lambda u)-J(u^*)\\
			&=2\lambda \mathbf{E}\bigg( \langle x^*_T,Hh_T\rangle +\int _0^T(\langle x^*_t,Fh_t\rangle +\langle u^*_t,Gu_t\rangle )\,dt\bigg) \\
			&\ \ \ +\lambda ^2\mathbf{E}\bigg( \langle x^*_T,Hx^*_T\rangle +\int _0^T(\langle x^*_t,Fx^*_t\rangle +\langle u^*_t,Gu^*_t\rangle )\,dt\bigg) .   
		\end{align*} 
		Dividing both sides consequently by $2\lambda $ and by $-2\lambda $ and then taking the limit when $\lambda \to 0$, we obtain
		\[
		0=\mathbf{E}\bigg( \langle x^*_T,Hh_T\rangle +\int _0^T(\langle x^*_t,Fh_t\rangle +\langle u^*_t,Gu_t\rangle )\,dt\bigg) .
		\] 
		This implies
		\begin{align*}
			0&=\mathbf{E}\int _0^T\big( \big\langle x^*_T,He^{A(T-t)}Bu_t\big\rangle +\langle u^*_t,Gu_t\rangle \big) \,dt\\
			&\ \ \ +\mathbf{E}\int _0^T\int _0^t\big( \big\langle x^*_t,Fe^{A(t-s)}Bu_s\big\rangle \big) \,dsdt.
		\end{align*}
		Interchanging the order of integration in the repeated integrals and using the arbitrariness of $u$ in $U^0$, we arrive to
		\[
		0=Gu^*_t+\mathbf{E}^0_t\bigg( B^\top e^{A^\top (T-t)}Hx^*_T+\int _t^TB^\top e^{A^\top (s-t)}Fx^*_s\,ds\bigg) .
		\]
		This proves \eqref{2.4} and \eqref{2.5}.
	\end{proof}
	\begin{lemma}
		\label{L.2.2}
		Assume that $A,H,F\in \mathbb{R}^{n\times n}$, $B\in \mathbb{R}^{n\times m}$, $C\in \mathbb{R}^{k\times n}$, $G\in \mathbb{R}^{m\times m}$, $H\ge 0$, $F\ge 0$, $G>0$, $w$ and $v$ are $n$- and $k$-dimensional martingales, $\varphi ^1\in L_2(0,T;L_2(\Omega ,\mathbb{R}^n))$, $\varphi ^2\in L_2(0,T;L_2(\Omega ,\mathbb{R}^k))$, $(\varphi ^1, \varphi ^2, w, v)$ and $\xi $ are independent. Additionally, assume that the functional $J$ from \eqref{2.1} subject to \eqref{2.2} takes its minimum on $U_{\rm ad}$ at $u^*\in U_{\rm ad}$. Let $x^*$ be the state process corresponding to the control $u^*$ and define $y$ by \eqref{2.5}. Then the following equality holds
		\begin{equation}
			\label{2.6}
			\mathbf{E}^0_\tau (y_t+K_tx^*_t+\alpha _t)=0,\ 0<\tau \le t\le T, 
		\end{equation}
		where $K$ is a solution of the backward Riccati equation
		\begin{equation}
			\label{2.7}
			\begin{cases}
				-K'_t=K_tA+A^\top K_t+F-K_tBG^{-1}B^\top K_t,\\
				K_T=H,\  0\le t<T.
			\end{cases}
		\end{equation}
		and $\alpha $ is a solution of the backward linear stochastic differential equation
		\begin{equation}
			\label{2.8}
			\begin{cases}
				-d\alpha _t=((A^\top -K_tBG^{-1}B^\top )\alpha _t+K_t\varphi _t)\,dt+K_t\,dw_t,\\
				\alpha_T=0,\  0\le t<T.
			\end{cases}
		\end{equation}
	\end{lemma}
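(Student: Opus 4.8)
The plan is to exhibit an explicit ``error process'' assembled from the adjoint variable $y$, the state $x^*$, the Riccati matrix $K$ and the auxiliary process $\alpha$, to show that this process solves a closed \emph{linear} equation with vanishing terminal data once one conditions on $\mathcal F^0_\tau$, and then to conclude by uniqueness. First, since $U_{\rm ad}\subseteq U^0$ and $\inf_{U_{\rm ad}}J=\inf_{U^0}J$ by property~(i) of \S\ref{S.2.1}, a minimizer $u^*\in U_{\rm ad}$ of $J$ is automatically a minimizer of $J$ over $U^0$; hence Lemma~\ref{L.2.1} applies and gives $u^*_t=G^{-1}B^\top\mathbf{E}^0_ty_t$ with $y$ defined by \eqref{2.5}. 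A purely pathwise computation — differentiating \eqref{2.5} with the Leibniz rule and using $\frac{d}{dt}e^{A^\top(T-t)}=-A^\top e^{A^\top(T-t)}$ — shows that $y$ is of finite variation with
\[
dy_t=\big(-A^\top y_t+Fx^*_t\big)\,dt,\qquad y_T=-Hx^*_T .
\]

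Next I would introduce $\Psi_t:=y_t+K_tx^*_t+\alpha_t$ and compute $d\Psi_t$. Because $K$ is deterministic and $C^1$ (being the solution of \eqref{2.7}), one has $d(K_tx^*_t)=K'_tx^*_t\,dt+K_t\,dx^*_t$ with no bracket term; substituting the state dynamics \eqref{2.2} and the equation \eqref{2.8} for $\alpha$, the martingale part $K_t\,dw_t$ and the drift term $K_t\varphi^1_t$ inherited from the state equation are exactly cancelled by the corresponding terms of \eqref{2.8} — this is precisely why \eqref{2.8} is posed with those coefficients — and using \eqref{2.7} to collapse the terms linear and quadratic in $x^*_t$ one is left with
\[
d\Psi_t=\Big(-A^\top\Psi_t+K_tB\big(u^*_t+G^{-1}B^\top(K_tx^*_t+\alpha_t)\big)\Big)\,dt ,
\]
while $K_T=H$ and $\alpha_T=0$ yield $\Psi_T=y_T+Hx^*_T=0$. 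In particular $\Psi$ has absolutely continuous paths, and solving this linear ODE backward from $T$ produces the representation
\[
\Psi_t=-\int_t^Te^{A^\top(s-t)}K_sB\big(u^*_s+G^{-1}B^\top(K_sx^*_s+\alpha_s)\big)\,ds .
\]

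Finally I would apply $\mathbf{E}^0_\tau$ for $0<\tau\le t\le T$. By conditional Fubini (legitimate because $y$, $Kx^*$ and $\alpha$ all lie in $L_2(0,T;L_2(\Omega,\cdot))$ under the standing hypotheses and $K$, $e^{A^\top\cdot}$ are bounded on $[0,T]$) together with the tower property and Lemma~\ref{L.2.1}, for $s\ge\tau$ one has $\mathbf{E}^0_\tau u^*_s=G^{-1}B^\top\mathbf{E}^0_\tau\mathbf{E}^0_sy_s=G^{-1}B^\top\mathbf{E}^0_\tau y_s$, hence $\mathbf{E}^0_\tau\big(u^*_s+G^{-1}B^\top(K_sx^*_s+\alpha_s)\big)=G^{-1}B^\top\mathbf{E}^0_\tau\Psi_s$. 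Writing $\psi_t:=\mathbf{E}^0_\tau\Psi_t$ (with $\tau$ fixed), the representation above becomes the homogeneous linear Volterra equation $\psi_t=-\int_t^Te^{A^\top(s-t)}K_sBG^{-1}B^\top\psi_s\,ds$ on $[\tau,T]$, equivalently $\psi'_t=(K_tBG^{-1}B^\top-A^\top)\psi_t$ with $\psi_T=0$; a Gronwall estimate (or ODE uniqueness) forces $\psi\equiv0$ on $[\tau,T]$, which is exactly \eqref{2.6}.

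The step that needs the most care is the computation of $d\Psi_t$: one must verify that the stochastic and $\varphi^1$ contributions cancel and that \eqref{2.7} makes the $x^*_t$-terms telescope. It is also worth recording at the outset that \eqref{2.7} admits a symmetric positive semidefinite solution on all of $[0,T]$ and that the backward stochastic differential equation \eqref{2.8} is well posed, both of which are standard for linear-quadratic problems with $F,H\ge0$ and $G>0$. Apart from this bookkeeping the argument is soft — no delicate estimate is required — the essential point being the fortunate fact that $\Psi=y+Kx^*+\alpha$ satisfies a closed linear equation with zero terminal value after projection onto the observation filtration.
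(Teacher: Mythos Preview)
Your proof is correct and follows essentially the same route as the paper: both arguments reduce to showing that $\psi_t:=\mathbf{E}^0_\tau(y_t+K_tx^*_t+\alpha_t)$ satisfies the homogeneous Volterra equation $\psi_t=-\int_t^Te^{A^\top(s-t)}K_sBG^{-1}B^\top\psi_s\,ds$ and then conclude by Gronwall. The only cosmetic difference is that you reach this equation by computing $d\Psi_t$ pathwise and observing the cancellations directly, whereas the paper assembles the same identity from the integral representations of $K$, $x^*$, $y$, and $\alpha$.
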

	\begin{proof}
		At first, note a solution of \eqref{2.8} is a random process $\alpha $ satisfying the integral equation 
		\begin{align}
			\label{2.9}
			\alpha _t&=\int _t^Te^{A^\top (s-t)}K_s\big( \varphi _s-BG^{-1}B^\top \alpha _s\big) ds+\int _t^Te^{A^\top (s-t)}K_s\, dw_s,
		\end{align}
		where $e^{At}$, $t\ge 0$, is a transition matrix of $A$. If\, $\mathcal{U}_{t,s}$, $0\le s\le t$, is a transition matrix of $A-BG^{-1}B^\top K_t$, then this equation has a unique solution in the exact form
		\[
		\alpha _t=\int _t^T\mathcal{U}^\top _{t,s}K_s\varphi _s\,ds+\int _t^T\mathcal{U}^\top _{t,s}K_s\,dw_s,\ 0\le t\le T.
		\]
		
		Next, since $U_{\rm ad}$ is a dense subset of $U^0$, the functional $J$ takes its minimum on $U^0$ at $u^*$ as well. Therefore, by Lemma \ref{L.2.1}, the equalities in \eqref{2.4} and \eqref{2.5} hold. The solution of the state equation in \eqref{2.2} corresponding to $u^*$ can be written as
		\begin{equation}
			\label{2.10}
			x^*_t=e^{A(t-\tau )}x^*_\tau +\int _\tau ^te^{A(t-s)}(Bu^*_s+\varphi _s)\,ds +\int _\tau ^te^{A(t-s)}\,dw_s
		\end{equation}
		for $0\le \tau \le t\le T$. Substituting \eqref{2.4} in \eqref{2.10}, we obtain 
		\begin{align*}
			x^*_t
			&=e^{A(t-\tau )}x^*_\tau +\int _\tau ^te^{A(t-s)}\big( BG^{-1}\mathbf{E}^0_sB^\top y_s+\varphi _s\big) \,ds+\int _\tau ^te^{A(t-s)}\,dw_s.
		\end{align*}
		Since $\mathbf{E}^0_s\mathbf{E}^0_\tau =\mathbf{E}^0_\tau $ for $0\le \tau \le s\le T$, this implies
		\begin{align}
			\label{2.11}
			\mathbf{E}^0_\tau x^*_t&=\mathbf{E}^0_\tau \bigg( e^{A(t-\tau )}x^*_\tau +\int _\tau ^te^{A(t-s)}\big( BG^{-1}B^\top y_s+\varphi _s\big) \,ds\nonumber \\
			&\ \ \ +\int _\tau ^te^{A(t-s)}\,dw_s\bigg) .
		\end{align}
		
		\eqref{2.11}, \eqref{2.5}, and \eqref{2.9} will be used to prove \eqref{2.6}. For this, we derive an expression for $\mathbf{E}^0_{\tau }K_tx^*_t$ in the following way. The backward Riccati equation in \eqref{2.7} can be written in the integral form
		\[
		K_t=e^{A^\top (T-t)}He^{A(T-t)}+\int _t^Te^{A^\top (s-t)}\tilde{F}_se^{A(s-t)}\,ds,
		\]
		where for brevity we denote
		\[
		\tilde{F}_t=F-K_tBG^{-1}B^\top K_t.
		\]
		Also, for brevity, let  
		\[
		h_t=BG^{-1}B^\top y_t+\varphi_t.
		\]
		Then
		\begin{align*}
			\mathbf{E}^0_{\tau }K_tx^*_t
			&=\mathbf{E}^0_{\tau }\bigg( e^{A^\top (T-t)}He^{A(T-t)}x^*_t+\int ^T_te^{A^\top (s-t)}\tilde{F}_se^{A(s-t)}x^*_t\, ds\bigg) \\
			&=\mathbf{E}^0_{\tau }\bigg( e^{A^\top (T-t)}H\bigg( x^*_T-\int _t^Te^{A(T-r)}\,dw_r-\int _t^Te^{A(T-r)}h_r\,dr\bigg) \\
			&\ \ \ +\int ^T_te^{A^\top (s-t)}\tilde{F}_s\bigg( x^*_s-\int _t^se^{A(s-r)}\, dw_r-\int _t^se^{A(s-r)}h_r\,dr\bigg) ds\bigg) .
		\end{align*}
		Grouping the terms in the right side, we obtain 
		\begin{align*}
			\mathbf{E}^0_{\tau }Q_tx^*_t
			&=\mathbf{E}^0_{\tau }\bigg( e^{A^\top (T-t)}Hx^*_T+ \int ^T_te^{A^\top (s-t)}\tilde{F}_sx^*_s\, ds \\
			&\ \ \ -\int ^T_t\!\!\!e^{A^\top (r-t)}\bigg( e^{A^\top (T-r)}He^{A(T-r)}\!+\!\!\int ^T_r\!\!e^{A^\top (s-r)}\tilde{F}_se^{A(s-r)}\,ds\bigg) dw_r \\
			&\ \ \ -\int ^T_t\!\!e^{A^\top (r-t)}\!\bigg( \!e^{A^\top (T-r)}He^{A(T-r)}\!+\!\!\int _r^T\!\!e^{A^\top (s-r)}\tilde{F}_se^{A(s-r)}\,ds\!\bigg) h_r\,dr\!\!\bigg) .
		\end{align*}
		This implies
		\begin{align*} 
			\mathbf{E}^0_{\tau }K_tx^*_t
			&=\mathbf{E}^0_{\tau }\bigg( e^{A^\top (T-t)}Hx^*_T+\int ^T_te^{A^\top (s-t)}\tilde{F}_sx^*_s\, ds\\
			&\ \ \ -\int ^T_te^{A^\top (s-t)}K_s\,dw_s-\int ^T_te^{A^\top (s-t)}K_sh_s\,ds\bigg) .
		\end{align*}
		Thus,
		\begin{align}
			\label{2.12}
			\mathbf{E}^0_{\tau }K_tx^*_t
			&=\mathbf{E}^0_{\tau }\bigg( e^{A^\top (T-t)}Hx^*_T- \int _t^Te^{A^\top (s-t)}K_s\, dw_s\nonumber \\
			&\ \ \ +\int ^T_te^{A^\top (s-t)}\left( F-K_sBG^{-1}B^\top K_s\right) x^*_s\,ds\nonumber \\
			&\ \ \ -\int _t^Te^{A^\top (s-t)}K_s\big( BG^{-1}B^\top y_s+\varphi _s\big) ds\bigg) .
		\end{align}
		Using \eqref{2.5}, \eqref{2.9}, and \eqref{2.12}, we arrive to
		\[
		\mathbf{E}^0_{\tau }(y_t+K_tx^*_t+\alpha _t)=
		-\int _t^Te^{A^\top (s-t)}K_sBG^{-1}B^\top \mathbf{E}^0_\tau (y_s+K_sx^*_s+\alpha _s)\, ds.
		\]
		If
		\[
		\lambda _{t,\tau }=\left\| \mathbf{E}^0_{\tau }(y_t+K_tx^*_t+ \alpha _t)\right\| ,
		\]
		then
		\[
		\lambda _{t,\tau }\le c\int _t^T\lambda _{s,\tau }\, ds,\
		c=\mathrm{const.}\ge 0,
		\]
		which, by the Gronwall's inequality, implies $\lambda _{t,\tau }=0$, $0\le \tau \le t\le T$. This completes the proof.
	\end{proof}
	\begin{theorem}[Extended Separation Principle]
		\label{T.2.1}
		Assume that $A,H,F\in \mathbb{R}^{n\times n}$, $B\in \mathbb{R}^{n\times m}$, $C\in \mathbb{R}^{k\times n}$, $G\in \mathbb{R}^{m\times m}$, $H\ge 0$, $F\ge 0$, $G>0$, $w$ and $v$ are $n$- and $k$-dimensional martingales, $\varphi ^1\in L_2(0,T;L_2(\Omega ,\mathbb{R}^n))$, $\varphi ^2\in L_2(0,T;L_2(\Omega ,\mathbb{R}^k))$, $(\varphi ^1, \varphi ^2, w, v)$ and $\xi $ are independent. Additionally, assume that the functional $J$ from \eqref{2.1} subject to \eqref{2.2} takes its minimum on $U_{\rm ad}$ at $u^*\in U_{\rm ad}$. Denote by $x^*$ the state processes corresponding to the optimal control $u^*$, and let $\mathbf{E}^*_t=\mathbf{E}(\cdot |\mathcal{F}^{u^*}_t)$.
		Then 
		\begin{equation}
			\label{2.13}
			u^*_t=-G^{-1}B^\top K_t\mathbf{E}^*_tx^*_t-G^{-1}B^\top \mathbf{E}^*_t\alpha _t,\ 0\le t\le T,
		\end{equation}
		where $K$ is a unique solution of \eqref{2.7} and $\alpha $ is defined by \eqref{2.8}.
	\end{theorem}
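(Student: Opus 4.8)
The plan is to assemble the statement directly from Lemma \ref{L.2.1} and Lemma \ref{L.2.2}, the only genuinely new ingredient being the identification of the observation filtration generated by $u^*$ with the one generated by $u\equiv 0$. So the proof will be short once those two lemmas are in hand.

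First I would observe that although Theorem \ref{T.2.1} only assumes that $J$ attains its minimum over $U_{\rm ad}$ at $u^*$, the same $u^*$ in fact minimizes $J$ over the larger subspace $U^0$: since $J$ from \eqref{2.1} is a continuous quadratic functional and property (i) gives $\inf_{U_{\rm ad}}J=\inf_{U^0}J$, we get $J(u^*)=\inf_{U_{\rm ad}}J=\inf_{U^0}J$. Hence Lemma \ref{L.2.1} applies and yields $u^*_t=G^{-1}B^\top\mathbf{E}^0_t y_t$ with $y$ as in \eqref{2.5}, while Lemma \ref{L.2.2} applies and yields \eqref{2.6}.

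Next I would specialize \eqref{2.6} to $\tau=t$ (admissible, since that identity holds for all $0<\tau\le t\le T$), obtaining $\mathbf{E}^0_t(y_t+K_tx^*_t+\alpha_t)=0$. Because $K_t$ is a deterministic matrix it commutes with the conditional expectation, so $\mathbf{E}^0_t y_t=-K_t\mathbf{E}^0_t x^*_t-\mathbf{E}^0_t\alpha_t$. Substituting this into the expression from Lemma \ref{L.2.1} gives $u^*_t=-G^{-1}B^\top K_t\mathbf{E}^0_t x^*_t-G^{-1}B^\top\mathbf{E}^0_t\alpha_t$, which is \eqref{2.13} but written with $\mathbf{E}^0_t$ in place of $\mathbf{E}^*_t$.

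The final — and really the only substantive — step is to replace $\mathbf{E}^0_t$ by $\mathbf{E}^*_t$. Here I would invoke property (d): since $u^*\in U_{\rm ad}$ we have $U^0_t=U^{u^*}_t$ for every $0<t\le T$, i.e.\ the spaces of square-integrable $\mathcal{F}^0_t$- and $\mathcal{F}^{u^*}_t$-measurable random variables coincide; this forces $\mathcal{F}^0_t$ and $\mathcal{F}^{u^*}_t$ to agree up to $\mathbf{P}$-null sets, so the two conditioning operators $\mathbf{E}^0_t$ and $\mathbf{E}^*_t$ are identical on the relevant $L_2$ spaces. Applying this to $x^*_t$ and $\alpha_t$ turns the displayed identity into \eqref{2.13}; uniqueness of $K$ solving \eqref{2.7} is the standard backward-Riccati fact already used in Lemma \ref{L.2.2}, and the value at $t=0$ is covered by continuity (or by reading both sides as conditioning on the trivial field). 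The hard part is not any computation but this filtration-identification point: it is exactly what uses $u^*\in U_{\rm ad}=U\cap U^0$ rather than merely $u^*\in U^0$, and it is the reason the acausal separation principle takes the form \eqref{2.13} with the extra term $-G^{-1}B^\top\mathbf{E}^*_t\alpha_t$ absent in \eqref{2.3}.
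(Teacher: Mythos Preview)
Your proposal is correct and follows essentially the same route as the paper: invoke Lemmas \ref{L.2.1} and \ref{L.2.2}, specialize \eqref{2.6} to $\tau=t$, and use property (d) for $u^*\in U_{\rm ad}$ to identify $\mathbf{E}^0_t$ with $\mathbf{E}^*_t$. The only cosmetic difference is that the paper swaps $\mathbf{E}^0_\tau$ for $\mathbf{E}^*_\tau$ in \eqref{2.6} before setting $\tau=t$, whereas you do these two steps in the opposite order; the outcome is identical.
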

	\begin{proof}
		Since $u^*\in U_{\rm ad}$, we have $U^{u^*}_\tau =U^0_\tau $ for all $0\le \tau \le T$. Therefore, $\mathbf{E}^0_\tau $ in \eqref{2.6} can be replaced by $\mathbf{E}^*_\tau $.  Then for $\tau =t$ in \eqref{2.6}, we obtain
		\begin{equation}
			\label{2.14}
			\mathbf{E}^*_ty_t=-\mathbf{E}^*_t(K_tx^*_t+\alpha _t),\ 0<t\le T. 
		\end{equation}
		Substituting \eqref{2.14} in \eqref{2.4}, we arrive to \eqref{2.13}.
	\end{proof}
	
	
	\subsection{Concluding remarks}
	\label{S.2.3} 
	
	Theorem \ref{T.2.1} does not state the existence of the optimal control. However, \eqref{2.13} and property (a) of the set of admissible controls say that if the best estimates $\mathbf{E}^*_tx^*_t$ and $\mathbf{E}^*_t\alpha _t$ accept a linear feedback form then the optimal control exists. A few such cases will be reviewed in the consequent sections in which we will adopt the notation
	\[
	\hat{x}^*_t=\mathbf{E}^*_tx^*_t\ \text{and}\ \hat{\alpha }_t=\mathbf{E}^*_t\alpha _t
	\]
	from filtering theory. Then \eqref{2.13} can be written as
	\begin{equation}
		\label{2.15}
		u^*_t=-G^{-1}B^\top K_t\hat{x}^*_t-G^{-1}B^\top \hat{\alpha }_t,\ 0\le t\le T.
	\end{equation}
	
	Theorem \ref{T.2.1} demonstrates that the optimal control in the LQG problem under rather general conditions on the relationship of the random inputs accounts the new term 
	\begin{equation}
		\label{2.16}
		u^{1*}_t=-G^{-1}B^\top \hat{\alpha }_t
	\end{equation}
	in addition to the term 
	\begin{equation}
		\label{2.17}
		u^{0*}_t=-G^{-1}B^\top K_t\hat{x}^*_t.
	\end{equation}
	This is due to acausality of the partially observed system under consideration. 
	
	What is the role of the new term in the problem? For this, substitute $u_t=u^{0*}_t+u^{1*}_t$ in \eqref{2.2} and combine the state noise in \eqref{2.2} with the $Bu^{1*}_t$. This  defines a new noise process $\phi _t$ by
	\[
	d\phi _t=\varphi ^1_t\,dt+dw_t+Bu^{1*}_t\,dt
	\]
	or by 
	\[
	d\phi _t=\big( \varphi ^1_t-BG^{-1}B^\top \hat{\alpha }_t\big) \,dt+dw_t,
	\]
	the role of which could be considered as a dual to the role of the innovation process in the filtering problems. If the process $\phi $ is given beforehand, then the system becomes causal, and the optimal control falls into the classic form given by \eqref{2.17}. This is a good hint to handle nonlinear acausal systems. Thus, the obligation on the optimal controls in the acausal stochastic control problems is seen to be twofold. 
	\begin{itemize}
		\item At first, they should have a component-term like $u^{1*}$ in \eqref{2.16} which transforms acausal setting to causal. 
		\item
		Secondly, they should have another component-term like $u^{0*}$ in \eqref{2.17} which minimizes the cost functional in the transformed causal problem. 
	\end{itemize}
	
	In order to complete the design issue for the control in \eqref{2.15}, effective filtering results for $\hat{x}^*_t$ and $\hat{\alpha }_t$ must be provided.  
	This is done below in a few specific acausality cases.
	
	
	\section{Case 1: State is corrupted by BN}
	\label{S.3} 
	
	\subsection{Setting of problem}
	
	We consider a particular case of the system in \eqref{2.2} in the form
	\begin{equation}
		\label{3.1}
		\begin{cases}
			x'_t=Ax_t+Bu_t+\varphi _t,\ x_0=\xi ,\ 0<t\le T, \\
			dz_t=Cx_t\,dt+dv_t,\ z_0=0,\ 0<t\le T.
		\end{cases}
	\end{equation}
	In addition to the conditions of Theorem \ref{T.2.1}, we assume that $\varphi $ is an $n$-dimensional BN with the autocovariance function 
	\[
	\Lambda _{t,\theta }=\mathrm{cov}\,(\varphi _{t+\theta },\varphi _t),\ 0\le \theta \le \varepsilon ,\ t\ge 0.
	\]
	while $v$ is a $k$-dimensional standard Wiener process. The noise inputs of the system $\varphi $, $v$ and $\xi $ are assumed to be mutually independent, $\xi $ is Gaussian, and $\mathbf{E}\xi =0$. Note that the signal system in \eqref{3.1} is written in terms of derivative unlike \eqref{2.2} which it is written in terms of differentials. This is because a BN is an ordinary random process unlike WN which is a generalized random process.
	
	\subsection{Optimal control}
	
	A combination of Theorem \ref{T.2.1} and the filtering result from \cite{B11, B10} produces the existence of a unique optimal control $u^*$ minimizing the cost functional in \eqref{2.1} subject to \eqref{3.1}. It is defined by \eqref{2.15} where $\hat{x}^*$ is a unique solution of the equation 
	\begin{equation}
		\label{3.2}
		\begin{cases}
			d\hat{x}^*_t=(A\hat{x}^*_t+Bu^*_t+\psi _{t,0})\,dt+P_tC^\top (dz^*_t-C\hat{x}^*_t\,dt),\\
			\hat{x}^*_0=0,\ 0< t\le T,
		\end{cases}
	\end{equation}
	and $\hat{\alpha }$ has the explicit form
	\begin{equation}
		\label{3.3}
		\hat{\alpha }_t=\int _t^{\min (t+\varepsilon , T)}\mathcal{U}^\top _{s,t}K_s\psi _{t,t-s}\,ds,
	\end{equation}
	provided that $\mathcal{U}_{t,s}$ is the transition matrix of $A-BG^{-1}B^\top K_t$. The random process $\psi $ included to \eqref{3.2} and \eqref{3.3} is a unique mild solution of the equation
	\begin{equation}
		\label{3.4} 
		\begin{cases} 
			\big(\frac{\partial }{\partial t}+\frac{\partial }{\partial \theta }\big) \psi _{t,\theta }\,dt=Q_{t,\theta }C^\top (dz^*_t-C\hat{x}^*_t\,dt),\\
			\psi _{0,\theta }=\psi _{t,-\varepsilon }=0,\ -\varepsilon \le \theta \le 0,\ 0<t\le T,
		\end{cases} 
	\end{equation}
	Additionally, $K$ is a unique solution of the backward Riccati equation in \eqref{2.7}, $P$ is a unique solution of the forward Riccati equation
	\begin{equation}
		\label{3.5}
		\begin{cases}
			P'_t=AP_t+P_tA^\top +Q_{t,0}+Q^\top _{t,0}-P_tC^\top CP_t,\\
			P_0=\mathrm{cov}\,\xi ,\ 0<t\le T,
		\end{cases}
	\end{equation}
	$Q$ and $R$ are unique mild solutions of
	\begin{equation}
		\label{3.6}
		\begin{cases} 
			\big( \frac{\partial }{\partial t}+\frac{\partial }{\partial \theta} \big)Q_{t,\theta }=Q_{t,\theta }A^\top +\Lambda _{t,-\theta }-R_{t,\theta ,0}-Q_{t,\theta }C^\top CP_t,\\
			Q_{0,\theta }=Q_{t,-\varepsilon }=0,\ -\varepsilon \le \theta \le 0,\ 0<t\le T,
		\end{cases} 
	\end{equation}
	and
	\begin{equation}
		\label{3.7}
		\begin{cases} 
			\big( \frac{\partial }{\partial t}+\frac{\partial }{\partial \theta}+\frac{\partial }{\partial \tau }\big) R_{t,\theta ,\tau }=Q_{t,\theta }C^\top CQ^\top _{t,\tau },\\
			R_{0,\theta ,\tau }\!=\!R_{t,-\varepsilon ,\tau }\!=\!R_{t,\theta ,-\varepsilon }\!=\!0,\ -\varepsilon \!\le \!\theta \!\le \!0,\ -\varepsilon \!\le \!\tau \!\le \!0,\ 0\!<t\!\le T.
		\end{cases} 
	\end{equation}
	
	Here, \eqref{3.4}, \eqref{3.6}, and \eqref{3.7} are partial differential equations and, therefore, they are infinite dimensional. Their solutions are understood in the mild sense. For \eqref{3.4}, the mild solution has the explicit form  
	\[
	\psi _{t,\theta }=\int _{\max (0,t-\theta -\varepsilon )}^tQ_{s,s-t+\theta }C^\top \,(dz^*_s-C\hat{x}^*_s\,ds). 
	\]
	However, for \eqref{3.6} and \eqref{3.7}, the mild solutions are the solutions of the integral equations, respectively, 
	\[
	Q_{t,\theta }=\int _{\max (0,t-\theta -\varepsilon )}^t(Q_{s,s-t+\theta }A^\top +\Lambda _{s,t-s-\theta }-R_{s,s-t+\theta ,0}-Q_{s,s-t+\theta }C^\top CP_s)\,ds 
	\]
	and
	\[
	R_{t,\theta ,\tau }=\int _{\max (0,t-\theta -\varepsilon ,t-\tau -\varepsilon )}^tQ_{s,s-t+\theta }C^\top CQ^\top _{s,s-t+\tau }\,ds. 
	\]
	
	\subsection{Sketch of the proof}
	\label{S.3.3}
	
	How are these equations obtained? The idea of the derivation is based on the integral representation
	\begin{equation}
		\label{3.8}
		\varphi _t=\int _{\max (0,t-\varepsilon )}^t\Phi _{t,s-t}\,dw_s
	\end{equation}
	where $w$ is an $l$-dimensional standard Wiener process, $\Phi $ is a function in $L_2(0,T;L_2(-\varepsilon ,0;\mathbb{R}^{n\times l}))$ obtained from equating the autocovariance function of \eqref{3.8} to $\Lambda _{t,\theta }$. This function is called a relaxing (damping) function. Generally, there are infinitely many relaxing functions $\Phi $ so that the autocovariance function of \eqref{3.8} equals to $\Lambda _{t,\theta }$. Take one of them and consider the family of BNs 
	\[
	\tilde{\varphi }_{t,\theta }=\int _{\max (0,t-\theta -\varepsilon )}^t\Phi _{t-\theta ,s-t+\theta }\,dw_s
	\] 
	for the family parameter $-\varepsilon \le \theta \le 0$. It satisfies $\Gamma \tilde{\varphi }_{t,\cdot }=\tilde{\varphi }_{t,0}=\varphi _t$ and 
	\begin{equation}
		\label{3.9}
		\begin{cases}
			\big(\frac{\partial }{\partial t}+\frac{\partial }{\partial \theta }\big) \tilde{\varphi }_{t,\theta }\,dt=\Phi _{t-\theta ,\theta }\,dw_t,\\
			\tilde{\varphi }_{0,\theta }=\tilde{\varphi }_{t,-\varepsilon }=0.
		\end{cases}
	\end{equation}
	Then, the state equation in \eqref{3.1} can be combined with \eqref{3.9} and the result becomes a linear stochastic equation 
	\begin{equation}
		\label{3.10}
		d\begin{bmatrix}x_t\\ \tilde{\varphi }_{t,\cdot}\end{bmatrix}=
		\begin{bmatrix}A & \Gamma \\ 0 & -d/d\theta \end{bmatrix}\begin{bmatrix}x_t\\ \tilde{\varphi }_{t,\cdot}\end{bmatrix}
		+\begin{bmatrix}B\\ 0\end{bmatrix}u_t+\begin{bmatrix}0\\ \Phi _{t-\,\cdot ,\cdot}\end{bmatrix}\,dw_t.
	\end{equation}
	for the new state process $[\, x_t\,\,\tilde{\varphi }_{t,\cdot }\,]^\top $. The cost functional in \eqref{2.1} and the observation sysem in \eqref{3.1} can also be written in terms of the new state process. Thus, the original acausal LQG problem is reduced to a new causal LQG problem for the WN driven state equation  while it is infinite dimensional. Applying the separation principle and Kalman filter in infinite dimensions \cite{CP}, the optimal control can be synthesized in the linear feedback form through forward and backward Riccati equations which are infinite dimensional. The rest of the proof is getting equations for the components of the optimal state and Riccati equations and making a specific substitution which transforms the dependence of equations on $\Phi $ to the dependence on $\Lambda $.
	
	\subsection{Discussion} 
	
	\eqref{2.15}, \eqref{3.2}--\eqref{3.7}, and \eqref{2.7} form a complete set of equations for the optimal control in the LQG problem for the cost functional from \eqref{2.1} and the BN driven state system from \eqref{3.1}. What do these equations mean? 
	\begin{itemize}
		\item 
		At first, note that $\bar{z}$, defined by 
		\[
		d\bar{z}_t=dz_t-C\hat{x}_t\,dt,\ \bar{z}_0=0,\ t>0,
		\] 
		is a Wiener process in accordance to Kalman filtering. It is called an innovation process.
		\item
		Comparing the equations in \eqref{3.4} and \eqref{3.9}, it is seen that $\psi _{t,0}$ is a BN generated by the Wiener process $\bar{z}$. So, the equation in \eqref{3.2} for the best estimate $\hat{x}^*_t$ is driven by the sum of WN $\bar{z}'_t$ and BN $\psi _{t,0}$, where the latter is generated by \eqref{3.4}. 
		\item
		The equation in \eqref{2.7} is a backward Riccati equation associated with control problem while the equation in \eqref{3.5} is a modification of the forward Riccati equation from Kalman filtering. 
		\item
		The equation in \eqref{3.7} for $R$ generates the autocovariance function $\Sigma _{t,\theta } =R_{t,-\theta ,0}$ of $\varphi _{t,0}$.
		\item 
		The relaxing function of $\psi _{t,0}$ has the exact form  $\Psi _{t,\theta }=Q_{t+\theta ,\theta }C^\top $
		which is generated by equation \eqref{3.6} for $Q$.
		\item
		The innovative noise $\phi $ making the system \eqref{3.1} causal is defined by
		\[
		\phi '_t=\varphi _t-BG^{-1}B^\top \hat{\alpha }_t
		\]
		or
		\[
		\phi '_t=\varphi _t-BG^{-1}B^\top \int _t^{\min (t+\varepsilon ,T)}\mathcal{U}^\top _{s,t}K_s\psi _{t,t-s}\,ds.
		\]
		\item 
		The equations in \eqref{2.15}, \eqref{3.2}--\eqref{3.7} and \eqref{2.7} of the optimal control are independent on the variations of the relaxing functions $\Phi $ of $\varphi $. They depend only on the autocovariance function $\Lambda $ of $\varphi $. This property is called an invariance. However, for the BN $\psi _{t,0}$ corrupting the equation of the optimal state these equations create the relaxing function $\Psi _{t,\theta }$, demonstrating that a relaxing function is an important parameter of BNs. It seems the independence on $\Phi $ is a consequence of the linearity of \eqref{3.1}. For nonlinear systems, it may not be valid.  
	\end{itemize}
	
	
	\section{Case 2: State and observations are corrupted by BN}
	\label{S.4}
	
	\subsection{Setting of problem} 
	
	Acausality takes place if the observations are corrupted by a BN as well. Consider the system in \eqref{2.2} in the form
	\begin{equation}
		\label{4.1}
		\begin{cases}
			x'_t=Ax_t+Bu_t+\varphi ^1_t,\ x_0=\xi ,\ 0<t\le T,\\
			dz_t=\big( Cx_t+\varphi ^2_t\big) \,dt+dv_t,\ z_0=0,\, 0<t\le T.
		\end{cases}
	\end{equation}
	In addition to the conditions of Theorem \ref{T.2.1}, we assume that $\varphi ^1$ and $\varphi ^2$ are $n$ and $k$-dimensional BNs with the autocovariance and cross covariance functions
	\[
	\Lambda ^{11}_{t,\theta }=\mathrm{cov}\,(\varphi ^1_{t+\theta },\varphi ^1_t),\  
	\Lambda ^{22}_{t,\alpha }=\mathrm{cov}\,(\varphi ^2_{t+\alpha },\varphi ^2_t),\ 
	\Lambda ^{12}_{t,\theta  }=\mathrm{cov}\,(\varphi ^1_{t+\theta },\varphi ^2_t).
	\]
	while $v$ is a $k$-dimensional standard Wiener process, $(\varphi ^1,\varphi ^2)$, $v$, and $\xi $ are mutually independent, $\xi $ is Gaussian, and $\mathbf{E}\xi =0$. 
	
	Note that in \eqref{4.1} the observation noise is modeled as the sum of BN and WN. The presence of WN is a restriction coming from Kalman filtering. At the first glance, such a combination of noises is not realistic. However, sensor measurements quite fit to the observation system in \eqref{4.1}, where the terms $Cx_t$, $v'_t$, and $\varphi ^2_t$ reflect a noise from the sensor input, a sensor noise related to deviation of sensor parameters from the standards, and a noise associated with the conversion of discrete measurements into continuous, respectively. Generally, a sensor noise is modeled as a WN while the discrete to continuous conversion creates a BN as it was discussed in the introductory section. 
	
	\subsection{Optimal control}
	
	A combination of Theorem \ref{T.2.1} and the filtering result from \cite{B8, B10} produces the existence of a unique optimal control $u^*$ minimizing the cost functional in \eqref{2.1} subject to t\eqref{4.1}. It is defined by \eqref{2.15} where $\hat{x}^*$ is a unique solution of the equation 
	\begin{equation}
		\label{4.2}
		\begin{cases}
			d\hat{x}^*_t=(A\hat{x}^*_t+\psi ^1_{t,0}+Bu^*_t)\,dt+(P_tC^\top +M^\top _{t,0})\\
			\ \ \ \ \ \ \ \ \times (dz^*_t-C\hat{x}^*_t\,dt -\psi ^2_{t,0}\,dt),\\
			\hat{x}^*_0=0,\ 0<t\le T,
		\end{cases}
	\end{equation} and $\hat{\alpha }$ has the explicit form
	\begin{equation}
		\label{4.3}
		\hat{\alpha }_t=\int _t^{\min (t+\varepsilon ,T)}\mathcal{U}^\top _{s,t}K_s\psi ^1_{t,t-s}\,ds,
	\end{equation}
	provided that $\mathcal{U}_{t,s}$ is the transition matrix of $A-BG^{-1}B^\top K_t$. The random processes $\psi ^1$ and $\psi ^2$ included to \eqref{4.2} and \eqref{4.3} are unique mild solutions of the equations
	\begin{equation}
		\label{4.4}
		\begin{cases}
			\big( \frac{\partial }{\partial t}+\frac{\partial }{\partial \theta }\big) \psi ^1_{t,\theta }\,dt=\big( Q_{t,\theta }C^\top +\Lambda ^{12}_{t,-\theta }-S_{t,\theta ,0}\big) \\
			\ \ \ \ \ \ \ \ \ \ \ \ \ \ \ \ \ \ \ \ \ \ \ \ \times (dz^*_t-C\hat{x}^*_t\,dt -\psi ^2_{t,0}\,dt),\\
			\psi ^1_{0,\theta  }=\psi ^1_{t,-\varepsilon }=0,\ -\varepsilon \le \theta \le 0,\ t\ge 0,
		\end{cases}
	\end{equation}
	\begin{equation}
		\label{4.5}
		\begin{cases}
			\big( \frac{\partial }{\partial t}+\frac{\partial }{\partial \alpha }\big) \psi ^2_{t,\alpha }\,dt=\big( M_{t,\alpha }C^\top +\Lambda ^{22} _{t,-\alpha }-N_{t,\alpha ,0}\big) \\
			\ \ \ \ \ \ \ \ \ \ \ \ \ \ \ \ \ \ \ \ \ \ \ \ \ \times (dz^*_t-C\hat{x}^*_t\,dt -\psi ^2_{t,0}\,dt),\\
			\psi ^2_{0,\alpha }=\psi ^2_{t,-\varepsilon }=0,\ -\varepsilon \le \alpha \le 0,\ t\ge 0.
		\end{cases}
	\end{equation}
	Additionally, $K$ is a unique solution of the backward Riccati equation in \eqref{2.7}, $P$ is a unique solution of the forward Riccati equation
	\begin{equation}
		\label{4.6}
		\begin{cases}P'_t =AP_t+P_tA^\top +Q_{t,0}+Q^\top _{t,0}-(P_tC^\top +M^\top _{t,0})(CP_t+M_{t,0}),\\ P_0=\mathrm{cov}\,\xi ,\ t>0,  \end{cases}
	\end{equation}
	$Q$, $M$, $R$, $N$, and $S$ are mild solutions of 
	\begin{equation}
		\label{4.7}
		\begin{cases}
			\big( \frac{\partial }{\partial t}+\frac{\partial }{\partial \theta} \big) Q_{t,\theta }=Q_{t,\theta }A^\top +\Lambda ^{11}_{t,-\theta }-R_{t,\theta ,0}\\
			\ \ \ \ \ \ \ \ \ \ \ \ \ \ \ \ \ \ \ \ \ -\big( Q_{t,\theta }C^\top +\Lambda ^{12}_{t,-\theta }-S_{t,\theta ,0}\big) (C\!P_t+M_{t,0}),\! \\ 
			Q_{0,\theta }=Q_{t,-\varepsilon }=0,\ -\varepsilon \le \theta \le 0,\ t\ge 0, 
		\end{cases}
	\end{equation}
	\begin{equation}
		\label{4.8}
		\begin{cases}
			\big( \frac{\partial }{\partial t}+\frac{\partial }{\partial \alpha }\big) M_{t,\alpha }=M_{t,\alpha }
			A^\top +\Lambda ^{12} _{t,-\alpha }-S_{t,\alpha ,0}\\
			\ \ \ \ \ \ \ \ \ \ \ \ \ \ \ \ \ \ \ \ \ \ -\big( M_{t,\alpha }C^\top +\Lambda ^{22} _{t,-\alpha }-N_{t,\alpha ,0}\big) (CP_t+M_{t,0}),\\
			M_{0,\alpha }=M_{t,-\varepsilon }=0\ -\varepsilon \le \alpha \le 0,\ t\ge 0, 
		\end{cases}
	\end{equation}
	\begin{equation}
		\label{4.9}
		\begin{cases}
			\big( \frac{\partial }{\partial t}+\frac{\partial }{\partial \theta}+\frac{\partial }{\partial \tau }\big) R_{t,\theta ,\tau }=\big( Q_{t,\theta }C^\top +\Lambda ^{12}_{t,-\theta }-S_{t,\theta ,0}\big) \\
			\ \ \ \ \ \ \ \ \ \ \ \ \ \ \ \ \ \ \ \ \ \ \ \ \ \ \ \ \ \times \big( CQ^\top _{t,\tau }+\Lambda ^{12\top }_{t,-\tau }-S^\top _{t,\tau ,0}\big) ,\\
			R_{0,\theta ,\tau }= R_{t,-\varepsilon ,\tau }=R_{t,\theta ,-\varepsilon }=0,\ -\varepsilon \le \theta \le 0,\ -\varepsilon \le \tau \le 0,\ t\ge 0,\!\!\! 
		\end{cases}
	\end{equation}
	\begin{equation}
		\label{4.10}
		\begin{cases}
			\big( \frac{\partial }{\partial t}+\frac{\partial }{\partial \alpha}+\frac{\partial }{\partial \sigma }\big) N_{t,\alpha ,\sigma }=\big( M_{t,\alpha }C^\top +\Lambda ^{22} _{t,-\alpha }-N_{t,\alpha ,0}\big) \\
			\ \ \ \ \ \ \ \ \ \ \ \ \ \ \ \ \ \ \ \ \ \ \ \ \ \ \ \ \ \ \times \big( CM^\top _{t,\sigma }+\Lambda ^{22\top } _{t,-\sigma }-N^\top _{t,\sigma ,0}\big) ,\\
			N_{0,\alpha ,\sigma }=N_{t,-\varepsilon ,\sigma }\!=\!N_{t,\alpha ,-\varepsilon }\!=\!0,\ -\varepsilon \le \alpha \le 0,\ -\varepsilon \le \sigma \le 0,\ t\ge 0, \!\!\!
		\end{cases}
	\end{equation}
	\begin{equation}
		\label{4.11}
		\begin{cases}
			\big( \frac{\partial }{\partial t}+\frac{\partial }{\partial \theta}+\frac{\partial }{\partial \alpha }\big) S_{t,\theta ,\alpha }=\big( Q_{t,\theta }C^\top +\Lambda ^{12}_{t,-\theta }-S_{t,\theta ,0}\big) \\
			\ \ \ \ \ \ \ \ \ \ \ \ \ \ \ \ \ \ \ \ \ \ \ \ \ \ \ \ \ \times \big( CM^\top _{t,\alpha }+\Lambda ^{22\top } _{t,-\alpha}-N^\top _{t,\alpha ,0}\big) ,\\
			S_{0,\theta ,\alpha }=S_{t,-\varepsilon ,\alpha }=S_{t,\theta ,-\varepsilon }=0,\ -\varepsilon \le \theta \le 0,\ -\varepsilon \le \alpha \le 0,\ t\ge 0.\!\!\!\!
		\end{cases}
	\end{equation}
	
	\subsection{Sketch of the proof}
	
	The idea of the derivation is similar to the case of Example 1. We let
	\begin{equation}
		\label{4.12}
		\varphi ^1_t=\int _{\max (0,t-\varepsilon )}^t\Phi ^1_{t,s-t}\,dw_s\ \text{and}\  \varphi ^2_t=\int _{\max (0,t-\varepsilon )}^t\Phi ^2_{t,s-t}\,dw_s 
	\end{equation}
	where $w$ is an $l$-dimensional standard Wiener process. $\Phi ^1$ and $\Phi ^2$ are found from 
	\[
	\mathrm{cov}\bigg( \begin{bmatrix} \varphi ^1_{t+\theta }\\ \varphi^2_{t+\theta }\end{bmatrix},\begin{bmatrix} \varphi ^1_t\\ \varphi^2_t\end{bmatrix} \bigg)=
	\begin{bmatrix} \Lambda ^{11}_{t,\theta } & \Lambda ^{12}_{t,\theta } \\ \Lambda ^{12*}_{t,\theta } & \Lambda ^{22}_{t,\theta }\end{bmatrix} 
	\]
	which has infinitely many solutions. Chose a solution $(\Phi ^1, \Phi ^2)$ so that  
	\[
	\Phi ^1\in L_2\big( 0,T;L_2\big( -\varepsilon ,0;\mathbb{R}^{n\times l}\big) \big)
	\]
	while 
	\[
	\frac{\partial \Phi ^2}{\partial t},\frac{\partial \Phi ^2}{\partial \theta }\in L_2\big( 0,T;L_2\big( -\varepsilon ,0;\mathbb{R}^{k\times l}\big) \big)\ \text{and}\ \Phi ^2_{t,-\varepsilon }=0.
	\] 
	This is main distinction applied to BNs corrupting state and observations. While for a BN of the state system, it suffices to run the proof with an $L_2$-type relaxing function $\Phi ^1$, it turns to be suitable to handle a smooth relaxing function $\Phi ^2$ with $L_2$-type partial derivatives for a BN of the observation system. Then letting 
	\[
	\tilde{\varphi }^1_{t,\theta }=\int _{\max (0,t-\theta -\varepsilon )}^t\Phi ^1_{t-\theta ,s-t+\theta }\,dw_s
	\]
	and
	\[ 
	\tilde{\varphi }^2_{t,\alpha }=\int _{\max (0,t-\alpha -\varepsilon )}^t\frac{\partial }{\partial \alpha }\Phi ^2_{t-\alpha ,s-t+\alpha }\,dw_s,
	\] 
	we obtain equations of the type \eqref{3.9} for $\tilde{\varphi }^1$ and $\tilde{\varphi }^2$ with $\Gamma \tilde{\varphi }^1_{t,\cdot }=\tilde{\varphi }^1_{t,0}=\varphi ^1_t$ and
	\[
	\Pi \tilde{\varphi }^2_{t,\cdot }=\int _{-\varepsilon }^0\tilde{\varphi }^2_{t,\theta }\,d\theta =\varphi ^2_t.
	\]
	While $\Gamma $ is an unbounded linear operator, $\Pi $ is bounded. The rest of the proof is replacement the state process $x_t$ with a new state process
	\[
	\tilde{x}_t=\begin{bmatrix} x_t \\ \tilde{\varphi }^1_{t,\cdot } \\ \tilde{\varphi }^2_{t,\cdot } \end{bmatrix},
	\]
	rewriting the cost functional in \eqref{2.1} and observation process in terms of new state process, applying the separation principle and Kalman filtering in infinite dimension, and then getting equations for the components of the best estimate and related Riccati equations. Note that in the new setting, the observation process becomes
	\[
	dz_t=\begin{bmatrix} C & 0 & \Pi \end{bmatrix}\begin{bmatrix} x_t \\ \tilde{\varphi }^1_{t,\cdot } \\ \tilde{\varphi }^2_{t,\cdot } \end{bmatrix}dt+dv_t
	\]  
	which fits to infinite dimensional Kalman filter because $\Pi $ is a bounded linear operator.
	
	\subsection{Discussion}
	
	\eqref{2.15}, \eqref{4.2}--\eqref{4.11} and \eqref{2.7} form a complete set of equations for the optimal control in the LQG problem for the cost functional from \eqref{2.1} and the BN driven state and observation systems from \eqref{4.1}. These equations have the following meaning.
	\begin{itemize}
		\item 
		At first, note that the innovation process $\bar{z}$ changes its equation in comparison to the case of Example 1. Now, it is defined through the BN $\psi ^2_{t,0}$ in the form
		\[
		d\bar{z}_t=dz^*_t-C\hat{x}^*_t\,dt -\psi ^2_{t,0}\,dt,\ \bar{z}_0=0,\ t>0.
		\] 
		According to infinite dimensional Kalman filtering, it is a Wiener process.
		\item
		Equations in \eqref{4.4} and \eqref{4.5} compared to \eqref{3.9} tell us that $\psi ^1_{t,0}$ and $\psi ^2_{t,0}$ are BNs generated by the Wiener process $\bar{z}$. So, the equation in \eqref{4.2} for the best estimate $\hat{x}^*_t$ is driven by two BNs: Directly, by $\psi ^1_{t,0}$ and indirectly through the innovation process $\bar{z}$ by $\psi ^2_{t,0}$ which are created by \eqref{4.4} and \eqref{4.5}.
		\item
		The equation in \eqref{2.7} is a backward Riccati equation associated with control problem while the equation in \eqref{4.6} is a modification of the forward Riccati equation from Kalman filtering. 
		\item
		The equations in \eqref{4.9} for $R$ and \eqref{4.10} for $N$ generate the autocovariance functions $\Sigma ^{11}_{t,\theta } =R_{t,-\theta ,0}$ of $\psi ^1 _{t,0}$ and $\Sigma ^{22}_{t,\alpha } =N_{t,-\alpha ,0}$ of $\psi ^2 _{t,0}$. At the same time, \eqref{4.11} creates the cross covariance function $\Sigma ^{12}_{t,\theta  }=S_{t,-\theta ,0}$ of $\psi ^1_{t,0}$ and $\psi ^2_{t,0}$.
		\item 
		The relaxing functions of BN's $\psi ^1_{t,0}$ and $\psi ^2_{t,0}$ have the exact forms, respectively,  
		\[
		\Psi ^1_{t,\theta }=Q_{t+\theta ,\theta }C^\top +\Lambda ^{12}_{t-\theta ,-\theta }-S_{t,\theta ,0}
		\]
		and 
		\[
		\Psi ^2_{t,\alpha }=M_{t+\alpha ,\alpha }C^\top +\Lambda ^{22}_{t-\alpha ,-\alpha }-N_{t,\alpha ,0}.
		\]
		$\Psi ^1$ is created by the solution $Q$ of \eqref{4.7} and difference of the cross covariance functions $\Lambda ^{12}$ (of $\varphi ^1_{t,0}$ and $\varphi ^2_{t,0}$) and $S$ (of $\psi ^1_{t,0}$ and $\psi ^2_{t,0}$). In a similar way, $\Psi ^2$ is created by the solution $M$ of \eqref{4.8} and difference of the autocovariance functions of the BNs $\varphi ^2_{t,0}$ and $\psi ^2_{t,0}$. 
		\item
		The innovative noise $\phi $ making the system \eqref{4.1} causal is defined by
		\[
		\phi '_t=\varphi _t-BG^{-1}B^\top \hat{\alpha }_t
		\]
		or
		\[
		\phi '_t=\varphi _t-BG^{-1}B^\top \int _t^{\min (t+\varepsilon ,T)}\mathcal{U}^\top _{s,t}K_s\psi ^2_{t,t-s}\,ds.
		\]
		\item 
		Again the equations in (2.15), (4.2)–(4.11) and (2.7) of the optimal control are independent on the variations of the relaxing functions $\Phi ^1$ of $\varphi ^1$ and $\Phi ^2$ of $\varphi ^2$. They depend only on the autocovariance and cross covariance functions of $\varphi ^1$ and $\varphi ^2$. However, for both BN $\psi ^1_{t,0}$ and $\psi ^2_{t,0}$ corrupting the equation of the optimal state these equations create the relaxing function $\Psi ^1_{t,\theta }$ and $\Psi ^2_{t,\alpha }$. \end{itemize}
	
	
	\section{Case 3: State is corrupted by pointwise delayed WN}
	\label{S.5}
	
	\subsection{Setting of problem}
	
	Now, consider LQG problem of minimizing \eqref{2.1} subject to the system 
	\begin{equation}
		\label{5.1}
		\begin{cases} 
			x_t'=Ax_t+Bu_t+Dw'_{\max (0,\lambda _t)},\ x_0=\xi ,\ 0<t\le T, \\
			z'_t=Cx_t+w'_t,\ z_0=0,\ 0<t\le T.
		\end{cases}
	\end{equation}
	In addition to the conditions of Theorem \ref{T.2.1}, we assume that $w$ is a $k$-dimensional standard Wiener process, $D\in \mathbb{R}^{n\times k}$, $w$ and $\xi $ are independent, $\xi $ is Gaussian, $\mathbf{E}\,\xi =0$, and $\lambda $ is a continuous and strictly increasing function with $t-\varepsilon \le \lambda _t\le t$. In \eqref{5.1}, the signal noise is a pointwise delay of  the observation noise in time for the time-dependent value $t-\lambda _t$. Instead of differential notation, the system in \eqref{5.1} is presented in the derivative form which is suitable to present the respective equations. Note that 
	\[
	w'_{\max (0,\lambda _t)}=(w')_{\max (0,\lambda _t)},
	\]
	not being confused with $(w_{\max (0,\lambda _t)})'$.
	
	\subsection{Optimal control}
	
	A combination of Theorem \ref{T.2.1} and the filtering result from \cite{B1, B9} produces the existence of a unique optimal control $u^*$ minimizing the cost functional in \eqref{2.1} subject to \eqref{5.1}. It is defined by \eqref{2.15} where $\hat{x}^*$ is a unique solution of the equation
	\begin{equation}
		\label{5.2}
		\begin{cases}
			d\hat{x}^*_t=(A\hat{x}^*_t+Bu^*_t+\psi _{t,0})\,dt+P_tC^\top (dz^*_t-C\hat{x}^*_t\,dt),\\
			\hat{x}^*_0=0,\ 0<t\le T,
		\end{cases}
	\end{equation}   
	and $\hat{\alpha }$ has the explicit form
	\begin{equation}
		\label{5.3}
		\hat{\alpha }_t=\int _t^{\min (t+\varepsilon ,T)}\mathcal{U}^\top _{s,t}K_s\psi _{t,t-s}\,ds,
	\end{equation}
	provided that $\mathcal{U}_{t,s}$ is the transition matrix of $A-BG^{-1}B^\top K_t$. Note that \eqref{5.2} and \eqref{5.3} are same as \eqref{3.2} and \eqref{3.3}, respectively. However, the process $\psi $ included to these equations come from different sources. The process $\psi $ in \eqref{5.2} and \eqref{5.3} is a unique mild solution of
	\begin{equation}
		\label{5.4}
		\begin{cases} 
			\big(\frac{\partial }{\partial t}+\frac{\partial }{\partial \theta }\big) \psi _{t,\theta }=Q_{t,\theta }C^\top (z^{*\prime }_t-C\hat{x}^*_t),\\
			\psi _{t,\theta }=0,\ t-\lambda ^{-1}_0\le \theta \le 0,\ 0\le t\le \lambda ^{-1}_0,\\
			\psi _{t,t-\lambda ^{-1}_t}=D(z^{*\prime }_t-C\hat{x}^*_t),\ t>0,
		\end{cases}
	\end{equation}
	which differs from \eqref{3.4}. Similarly, $P$ is a unique solution of the Riccati equation
	\begin{equation}
		\label{5.5}
		\begin{cases}
			P'_t=AP_t+P_tA^\top +Q_{t,0}+Q^\top _{t,0}-P_tC^\top CP_t,\\
			P_0=\mathrm{cov}\,\xi ,\ t>0,
		\end{cases}
	\end{equation}
	which is same as \eqref{3.5} while the function $Q$ as well as $R$ are unique mild solutions of the equations
	\begin{equation}
		\label{5.6}
		\begin{cases} 
			\big( \frac{\partial }{\partial t}+\frac{\partial }{\partial \theta} \big)Q_{t,\theta }=Q_{t,\theta }A^\top -R_{t,\theta ,0}-Q_{t,\theta }C^\top CP_t,\\
			Q_{t,\theta }=0,\ t-\lambda ^{-1}_0\le \theta \le 0,\, 0\le t\le \lambda ^{-1}_0, \\
			Q_{t,t-\lambda ^{-1}_t}=-DCP_t,\ t>0,
		\end{cases}
	\end{equation}
	and
	\begin{equation}
		\label{5.7}
		\begin{cases} 
			\big( \frac{\partial }{\partial t}+\frac{\partial }{\partial \theta}+\frac{\partial }{\partial \tau }\big) R_{t,\theta ,\tau }=Q_{t,\theta }C^\top CQ^\top _{t,\tau },\\
			R_{t,\theta ,\tau }=0,\ t-\lambda ^{-1}_0\le \theta \le \tau \le 0,\ 0\le t\le \lambda ^{-1}_0,\\
			R_{t,\theta ,\tau }=0,\ t-\lambda ^{-1}_0\le \tau \le \theta \le 0,\ 0\le t\le \lambda ^{-1}_0,\\
			R_{t,\theta ,t-\lambda ^{-1}_t}=Q_{t,\theta }C^\top D^\top ,\ t-\lambda ^{-1}_t\!\le \!\theta \!<\!\min \big( 0,t-\lambda ^{-1}_0\big) ,\ t>0,\\
			R_{t,t-\lambda ^{-1}_t,\tau }=DCQ^\top _{t,\tau },\ t-\lambda ^{-1}_t\!\le \!\tau \!<\!\min \big( 0,t-\lambda ^{-1}_0\big) ,\ t>0.
		\end{cases}
	\end{equation}
	which differ from \eqref{3.6} and \eqref{3.7}, respectively.
	
	Equations in \eqref{5.4}, \eqref{5.6}, and \eqref{5.7} are partial differential equations. Their solutions are understood in the mild sense. For \eqref{5.4}, the mild solution has the explicit form  
	\[
	\psi _{t,\theta }=D(z^{*\prime }_{\lambda _{t-\theta}}-C\hat{x}^*_{\lambda _{t-\theta }})+\int _{\lambda _{t-\theta }}^tQ_{s,s-t+\theta }C^\top \,(z^{*\prime }_s-C\hat{x}^*_s)\,ds. 
	\]
	However, for \eqref{3.6} and \eqref{3.7}, the mild solutions are the solutions of the integral equations 
	\begin{align*}
		Q_{t,\theta }
		&=-DCP_{\lambda _{t-\theta }}
		&+\int _{\lambda _{t-\theta }}^t(Q_{s,s-t+\theta }A^\top -R_{s,s-t+\theta ,0}-Q_{s,s-t+\theta }C^\top CP_s) 
		\,ds 
	\end{align*}
	and
	\[
	R_{t,\theta ,\tau }=DCQ^\top _{\lambda _{t-\theta },\lambda _{t-\theta }-t+\tau }+\int _{\lambda _{t-\theta }}^tQ_{s,s-t+\theta }C^\top CQ^\top _{s,s-t+\tau }\,ds
	\]
	if $t-\lambda ^{-1}_t\le \theta \le \tau \le \min (0,t-\lambda ^{-1}_0)$
	and
	\[
	R_{t,\theta ,\tau }=Q_{\lambda _{t-\tau },\lambda _{t-\tau }-t+\theta }C^\top D^\top +\int _{\lambda _{t-\tau }}^tQ_{s,s-t+\tau }C^\top CQ^\top _{s,s-t+\theta }\,ds
	\]
	if $t-\lambda ^{-1}_t\le \tau \le \theta \le \min (0,t-\lambda ^{-1}_0)$.
	
	\subsection{Sketch of the proof}
	
	Let $\delta $ be the Dirac's delta-function. It turns out that if $\Phi _{t,\theta }=D\delta _\theta $ in \eqref{3.8}, then the LQG problem and its solution from Section \ref{S.3} reduce to the classic separation principle together with its ingredient Kalman filtering result for independent WNs. Therefore, the formulae of Section \ref{S.3} accept relaxing functions defined through the delta-function as well.  
	
	Based on this idea, the following guides for the proof of \eqref{2.15} and \eqref{5.2}--\eqref{5.7} can be determined. At first, following to the proof sketched in Subsection \ref{S.3.3}, get equations in the case of  correlated BN $\varphi $ and WN $v'$ by taking $w=v$. This results with equations in which the correlation terms appear. For simplicity, in Section \ref{S.3} the case of independent BN and WN was considered. Don't make passage from the relaxing function $\Phi $ to the autocovariance function $\Lambda $ but let $\Phi _{t,\theta }=D\delta _{\theta +t-\lambda _t}$ in the obtained equations instead of $\Phi _{t,\theta }=D\delta _\theta $. Then make simplification which moves correlation terms to the boundary conditions.
	
	\subsection{Discussion}
	
	The following are some comments regarding the LQG problem of this section.
	\begin{itemize}
		\item 
		The innovation process $\bar{z}$ defined by 
		\[
		\bar{z}'_t=z^{*\prime }_s-C\hat{x}^*_s,\ \bar{z}=0,
		\] 
		is still a standard Wiener process.
		\item The process $\psi _{t,0}$ is now equals to
		\[
		\psi _{t,0}=D\bar{z}'_{\lambda _t}+\int _{\lambda _t}^tQ_{s,s-t}C^\top \bar{z}'_s\,ds.
		\]
		Therefore, it is sum of a shifted WN and a BN.
		\item 
		Although the system in \eqref{5.1} is disturbed only by WNs, the shift of noises makes \eqref{5.2} to be disturbed by the WN $\bar{z}'_t$, its shift $\bar{z}'_{\lambda _t}$, and the BN component of $\psi _{t,0}$. 
		\item 
		A particular case of $\lambda $ is achieved if $\lambda _t=t-\varepsilon $. This is a case when the delay is a constant. Turning back to the scenario of spacecraft communication considered in the introductory section, a constant delay corresponds to a spacecraft which has approximately constant distance from the Earth. These are the Earth orbiting satellites and a spacecraft that either has landed on the Moon or is in the Lunar Orbit. The most far of them is the Moon which is 384.400 km away from the Earth. Radio signals with the approximate speed 300.000 km/s cover the distance  from a ground radar to the Moon  and come back for 2,56 s. This time delay is looking small, but it should be noted that it increases with time duration necessary for preparing return signals. Therefore, the case of constant delay could be useful for communication with a spacecraft that has either landed on the Moon or is in Lunar Orbit as well as with the Earth orbiting satellites.
		\item 
		The other planets of the Solar System are significantly distanced from the Earth. Spacecraft flying towards these planets with the interplanetary mission gradually increase the distance from the Earth. The spacecraft Voyager 2 lunched by NASA in 1977 passed to the Jovian, Saturnian, Uranian, Neptunian  systems and left the Solar System in 2018. The shortest distances to these systems in the increasing order are approximately 0.8, 1.5, 2.8, and 4.4 billion kilometers. Respectively, radio signals cover these distances and come back for approximately 1.5, 2.8, 5.2, and 8.1 hours, demonstrating a significant and gradually increasing time delay. This is the case when $\lambda _t=ct$. Here, $c$ satisfies $0<c <1$ to get $ct<t$. Also, $c$ must be close to 1 from the left because the speed of the spacecraft is much smaller than the speed of the radio signals. Therefore, the difference $t-ct=(1-c)t$ increases slowly.
		
		\item 
		One of the goals of the NASA's Mars Exploration Program (MEP)  is a human landing on the Mars. Respectively, a spacecraft exploring the near planet Mars should be designed with aim to return back. The shortest distance to Mars is approximately 54.600.000 km. Radio waves run this distance and come back for approximately 6 min. This time delay is sufficiently large to be taken into account. For spacecraft flying towards the Earth, the distance gradually decreases. This is the case $\lambda _t=c(t-\varepsilon)$. Here, $T=\varepsilon c(c-1)^{-1}$ is the instant when the spacecraft reaches the Earth because $c(T-\varepsilon )=T$. Also $c>1$ should be close to 1 because the difference $t-c(t-\varepsilon )=(1-c)t-c\varepsilon $ should decrease slowly.
	\end{itemize}
	
	
	\section{Conclusions}
	
	In this paper, we try to expose existing exact results for optimal control in the acausal LQG problems. Specifically, it was made an effort to demonstrate possible areas of application of these results by discussing different scenarios. A further development of this issue is seen as follows:
	\begin{itemize}
		\item
		Theoretically, optimal control problems for acausal systems can be expanded to nonlinear systems. In this regard, the separation of controls into two terms from Subsection \ref{S.2.3} so that one of them makes the system causal and the other one minimizes the cost functional in the causal setting can play a crucial role. For LQG problems, this issue has been solved perfectly. For nonlinear acausal systems this problem stays open.
		\item
		From applied point of view, the presented results are ready for application. Just they need a support by developing some numerical methods for solution of nonlinear components. The paper contains some scenarios of possible areas of application.   
	\end{itemize} 
	
	
	\section*{Acknowledgements} The first author acknowledges support by the Open
	Access Publication Fund of the University of Duisburg-Essen. This work has been funded by the Deutsche Forschungsgemeinschaft (DFG, GermanResearch Foundation) through the research grant number HU1889/7-1. 
	

\end{document}